\renewcommand*{\backref}[1]{}
\renewcommand*{\backrefalt}[4]{
  \ifcase #1 %
   [No citations.]%
  \or
   [#2]%
  \else
   [#2]%
  \fi
}
\newcommand{\calT}{\mathcal{T}}
\newcommand{\NN}{\mathbb{N}}
\newcommand{\ZZ}{\mathbb{Z}}
\newcommand{\defeq}{\colonequals}
\DeclareMathOperator{\intersection}{\iota}
\DeclareMathOperator{\poly}{poly}
\theoremstyle{plain}
\numberwithin{equation}{section}
\newtheorem{theorem}[equation]{Theorem}
\newtheorem{corollary}[equation]{Corollary}
\newtheorem{lemma}[equation]{Lemma}
\newtheorem{proposition}[equation]{Proposition}
\newtheorem{remark}[equation]{Remark}
\theoremstyle{definition}
\newtheorem{definition}[equation]{Definition}
\newtheorem*{definition*}{Definition}
\newtheorem*{acknowledgements}{Acknowledgements}
\newtheoremstyle{dotless}{}{}{}{}{\bfseries}{}{ }{}
\theoremstyle{dotless}
\newcommand{\fakeenv}{} 
 \renewcommand{\fakeenv}{#2} 
 \theoremstyle{plain} 
 \newtheorem*{\fakeenv}{#1~\ref{#2}} 
\newcommand{\inlineand}{\quad \textrm{and} \quad}
\newcommand{\nth}{\textsuperscript{th}}
\title{Applications of fast triangulation simplification}
\author{Mark C. Bell\footnote{Department of Mathematics, University of Illinois: \texttt{mcbell@illinois.edu}} \and Richard C. H. Webb\footnote{DPMMS, Centre for Mathematical Sciences, University of Cambridge: \texttt{rchw2@cam.ac.uk}}}
\begin{document}

\maketitle

\begin{abstract}
We describe a new algorithm to compute the geometric intersection number between two curves, given as edge vectors on an ideal triangulation.
Most importantly, this algorithm runs in polynomial time in the bit-size of the two edge vectors.

In its simplest instances, this algorithm works by finding the minimal position of the two curves.
We achieve this by phrasing the problem as a collection of linear programming problems.
We describe how to reduce the more general case down to one of these simplest instances in polynomial time.
This reduction relies on an algorithm by the first author to quickly switch to a new triangulation in which an edge vector is significantly smaller.
\end{abstract}

\keywords{triangulations of surfaces, geometric intersection number, flip graphs, Dehn twists, regular neighbourhood}

\ccode{57M20}  

\section{Introduction}

Let $S$ be an (orientable) punctured surface and let $\zeta = \zeta(S) \defeq -3 \chi(S)$.
We will assume that $S$ is sufficiently complex that $\zeta \geq 3$ and so $S$ can be decomposed into an (ideal) triangulation.
Any such triangulation of $S$ has exactly $\zeta$ edges.

We can use a triangulation to give a combinatorial description of a curve.
The (essential, simple closed) curve $\gamma$ on $S$ is uniquely determined by its \emph{edge vector}:
\[ \calT(\gamma) \defeq \left(\begin{array}{c} \intersection(\gamma, e_1) \\ \vdots \\ \intersection(\gamma, e_\zeta) \end{array} \right) \in \NN_0^\zeta \]
where $e_1, \ldots, e_\zeta$ are the (ordered) edges of $\calT$.

In this paper we describe a new algorithm for computing the geometric intersection number $\intersection(\alpha, \beta)$ from $\calT(\alpha)$ and $\calT(\beta)$.
Most importantly, this algorithm runs in polynomial time in the bit-size of $\calT(\alpha)$ and $\calT(\beta)$.

To achieve this we focus on the simplest case when $\calT$ is \emph{$\alpha$--minimal}, that is, when $\calT$ minimises $\intersection(\alpha, \calT)$.
On such a triangulation the combinatorics of $\alpha$ are extremely restricted and so there are very few possibilities that we need to consider.
This allows us to reduce finding the minimal position for $\alpha$ and $\beta$ down to a collection of linear programming problems.
We show that these problems are sufficiently small that we can solve them, and so deduce $\intersection(\alpha, \beta)$, in polynomial time.

In the more general case, we apply a series of moves to convert the problem back to one on an $\alpha$--minimal triangulation.
These moves consist of flipping edges of the triangulation and performing powers of a Dehn twist along a short curve.
In \cite{BellSimplifying} the first author showed that there is always such a move which reduces $\intersection(\alpha, \calT)$ by a definite fraction.
We use this to bound the number of moves needed to reach an $\alpha$--minimal triangulation.

There are several other simplification results in other models of curves on surfaces \cite[Section~4]{AHT} \cite{DynnikovBraids} \cite{EricksonNayyeri} \cite{SchaeferSedgwick}.
However, in all of these other models it is very difficult to keep track of how another curve changes during the simplification process.
This makes it extremely hard to reduce the generic problem down to the $\alpha$--minimal case as we are required to track $\beta$ through these moves too.

\section{Minimal triangulations}
\label{sec:minimal}

In this section we consider the problem of putting $\alpha$ in minimal position with respect to $\beta$ when both of these are curves given on an $\alpha$--minimal triangulation $\calT$.

One case that is particularly straightforward is if $\alpha$ is \emph{non-isolating}, that is, if every component of $S - \alpha$ contains a puncture.
Here there is only one possibility for how $\alpha$ can appear on $\calT$.

\begin{lemma}[{\cite[Section~2.4.2]{BellThesis}}]
If $\alpha$ is non-isolating then $\intersection(\alpha, \calT) = 2$ and so $\alpha$ must appear on $\calT$ as shown in Figure~\ref{fig:non_isolating}. \qed
\end{lemma}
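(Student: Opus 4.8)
The plan is to bracket $\intersection(\alpha, \calT)$ between $2$ and $2$. First I would establish the universal lower bound $\intersection(\alpha, \calT) \geq 2$, valid for any essential $\alpha$ on any ideal triangulation once $\alpha$ is placed in minimal (normal) position. In this position $\alpha$ meets each triangle in a disjoint union of corner arcs, each joining two \emph{distinct} edges, and the number of such arcs equals $\intersection(\alpha, \calT)$. If this number were $0$ then $\alpha$ would lie inside a single disk triangle and be inessential; if it were $1$ then $\alpha$ would be a single arc with endpoints on two distinct edges, which cannot close up into a loop. Hence $\intersection(\alpha, \calT) \geq 2$.

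Next I would show that the non-isolating hypothesis forces $\intersection(\alpha, \calT) \leq 2$. Since $\calT$ realises the minimum of $\intersection(\alpha, \cdot)$, it is enough to exhibit a single ideal triangulation on which $\alpha$ has weight $2$. I would build this from a regular neighbourhood $N$ of $\alpha$, an annulus whose two boundary circles are parallel to $\alpha$. Using that every component of $S - \alpha$ contains a puncture, I would join each boundary circle of $N$ to a puncture by an embedded arc, and use these arcs to install, on each side, a loop edge based at that puncture and running parallel to the corresponding boundary circle. Adding the two arcs of $N$ that each cross $\alpha$ once as the remaining edges presents $\alpha$ as the core of a region $R$ cut out by two ideal triangles glued along two edges; this local picture completes to an ideal triangulation of $S$ on which $\alpha$ crosses exactly two edges. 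Combined with the lower bound, this gives $\intersection(\alpha, \calT) = 2$.

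It remains to see that weight $2$ forces the configuration of Figure~\ref{fig:non_isolating}. With $\intersection(\alpha, \calT) = 2$ the curve is a union of exactly two corner arcs sharing both endpoints, one on an edge $e$ and one on an edge $f$; a short argument shows $e \neq f$ and that the two arcs lie in two distinct triangles $T_1, T_2$, each having $e$ and $f$ among its sides. Since each of $e, f$ is adjacent to exactly two triangle-sides, both $e$ and $f$ are glued to the same pair $\{T_1, T_2\}$, so $T_1$ and $T_2$ are glued along both $e$ and $f$ and $\alpha$ is the core of $R$. I would then enumerate the gluing patterns of $T_1$ to $T_2$ along $e$ and $f$: those identifying the two cut-off corners yield a once-punctured disk, making $\alpha$ peripheral and hence inessential; those producing a M\"obius band are excluded by orientability of $S$; the only surviving possibility makes $R$ an annulus with $\alpha$ as its core, which is exactly Figure~\ref{fig:non_isolating}.

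The main obstacle is the construction in the second step: converting the soft statement ``each side carries a puncture'' into an explicit triangulation realising $\alpha$ with weight $2$. Care is needed to treat the separating and non-separating cases uniformly (when $\alpha$ is non-separating the two boundary circles of $N$ bound into the same component), to choose the auxiliary arcs disjointly, to check that the loop edges together with $e$ and $f$ genuinely extend to a triangulation of the complement, and to accommodate edges that are glued to themselves. The classification in the third step is then a finite, if slightly fiddly, case analysis whose only real input is the exclusion of the peripheral and non-orientable gluings.
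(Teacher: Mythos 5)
The paper itself offers no argument for this lemma: it is quoted from \cite[Section~2.4.2]{BellThesis}, and the route taken there --- as one can infer from how that section is invoked for the isolating case and from Lemma~\ref{lem:drop_intersection} --- is a flip-reduction argument: any configuration other than that of Figure~\ref{fig:non_isolating} admits a short sequence of flips (for instance along an arc running from alongside $\alpha$ into a puncture, exactly as in the proof of Proposition~\ref{prop:tripod_corridor}) which strictly decreases $\intersection(\alpha, \calT)$, contradicting $\alpha$--minimality. Your proof is genuinely different and, as far as I can see, correct: you sandwich the quantity between the universal lower bound $\intersection(\alpha,\calT)\geq 2$ for essential curves and an explicit triangulation of weight $2$ built from loop edges parallel to $\alpha$ based at punctures on either side. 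The non-isolating hypothesis enters your argument in exactly the same place it powers the thesis's disjoint-arc flip, namely in supplying a puncture on each side of $\alpha$; and since your construction is in effect a direct description of Figure~\ref{fig:non_isolating}, it makes the final classification step quite transparent. What the flip-based route buys instead is uniformity: it is the same machinery that handles the isolating case (tripods and corridors, Proposition~\ref{prop:tripod_corridor}) and the general simplification of Lemma~\ref{lem:drop_intersection}, whereas your realization argument has no analogue there because an isolating curve admits no weight-$2$ triangulation.

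Two details to tighten. First, your reason for excluding $\intersection(\alpha,\calT)=1$ (``cannot close up into a loop'') is not quite right: a single corner arc \emph{can} close up when a triangle has two of its sides identified to the same edge, but the resulting curve lies in a cone neighbourhood of the corresponding puncture and is peripheral, so the conclusion stands. Similarly, in the classification step, the two arcs lie in distinct triangles because a single triangle cannot carry both sides of $e$ and both sides of $f$ (four distinct sides), and the case $e=f$ again forces a self-folded triangle and a peripheral curve; these are the ``short arguments'' you defer, and they do go through. Second, in the smallest admissible case ($\zeta=3$, the once-punctured torus) the two loops $a$ and $c$ of your construction are isotopic and must be taken to be a single edge; the construction and Figure~\ref{fig:non_isolating} survive this, but the extension to a full ideal triangulation must not list them as two distinct edges.
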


\begin{figure}[ht]
	\centering
	\begin{tikzpicture}[scale=2,thick]

\tikzset{tri/.style={draw,scale=0.5,fill=white,regular polygon,regular polygon sides=3}}
\tikzset{snake/.style={decorate, decoration=snake}}

\node (rect) at (0, 0) [draw,minimum width=3cm,minimum height=3cm] {};

\draw (rect.south west) -- node [above] {$e$} (rect.north east);
\node [above] at (rect.north) {$a$};
\node [left] at ($(rect.west)+(-0.10,0)$) {$b$};
\node [below] at (rect.south) {$c$};
\node [right] at ($(rect.east)+(0.10,0)$) {$b$};

\draw [red] ($(rect.north west)!0.25!(rect.south west)$) -- node[black, above] {$\alpha$} ($(rect.north east)!0.25!(rect.south east)$);

\node [tri] (a) at (rect.west) {};
\node [tri] (a) at (rect.east) {};

\draw (rect.north west) -- ($(rect.north west) + (-0.25,  0.5)$);
\draw (rect.north east) -- ($(rect.north east) + ( 0.25,  0.5)$);
\draw (rect.south west) -- ($(rect.south west) + (-0.25, -0.5)$);
\draw (rect.south east) -- ($(rect.south east) + ( 0.25, -0.5)$);
\draw [snake, gray] ($(rect.north west) + (-0.25,  0.5)$) -- ($(rect.north east) + ( 0.25,  0.5)$);
\draw [snake, gray] ($(rect.south west) + (-0.25, -0.5)$) -- ($(rect.south east) + ( 0.25, -0.5)$);

\node (t) at ($(rect.south) + (0,-0.7)$) [anchor=north] {$\calT$};

\end{tikzpicture}
	\caption{An $\alpha$--minimal triangulation $\calT$ when $\alpha$ is non-isolating.}
	\label{fig:non_isolating}
\end{figure}
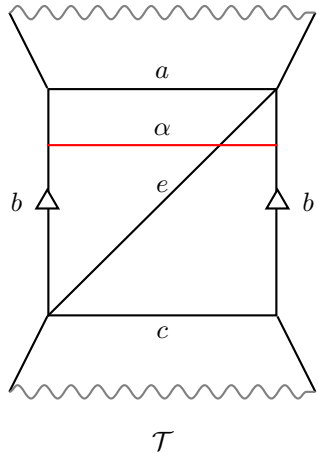

Hence in this configuration it is straightforward to put $\alpha$ in minimal position with respect to $\beta$ and so we can directly compute their intersection number.

\begin{proposition}
If $\alpha$ is non-isolating then, following the notation of Figure~\ref{fig:non_isolating},
\[ \intersection(\alpha, \beta) = \max(\mathbf{e} - \mathbf{b}, \mathbf{b} - \mathbf{e}, \mathbf{a} + \mathbf{c} - \mathbf{b} - \mathbf{e}) \]
where $\mathbf{x} \defeq \intersection(\beta, x)$. \qed
\end{proposition}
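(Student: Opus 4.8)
The plan is to use the Lemma to pin $\alpha$ down exactly, and then to reduce the computation of $\intersection(\alpha,\beta)$ to counting arcs of $\beta$ inside an annular neighbourhood of $\alpha$. By the Lemma, $\alpha$ meets $\calT$ only in the edges $b$ and $e$, crossing each exactly once, so the two triangles of the square in Figure~\ref{fig:non_isolating}—glued along $e$ and along the identified copies of $b$—form an annulus $\mathcal{C}$ whose core is $\alpha$ and whose two ends run along $a$ and $c$. First I would put $\beta$ in normal position with respect to $\calT$; since $\alpha \subset \mathcal{C}$, every point of $\alpha \cap \beta$ lies in $\mathcal{C}$.

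Next I would invoke the standard fact that the geometric intersection of the core of an annulus with a transverse $1$--manifold equals the number of spanning arcs (those joining the two ends). The components of $\beta \cap \mathcal{C}$ with both endpoints on $a$, or both on $c$, together with any closed components, are boundary--parallel in $\mathcal{C}$ and so can be isotoped off $\alpha$; the spanning arcs are mutually parallel, so a single core representative meets each exactly once. Hence $\intersection(\alpha,\beta) = N$, where $N$ is the number of arcs of $\beta \cap \mathcal{C}$ running from $a$ to $c$.

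It then remains to compute $N$ from $\mathbf{a}, \mathbf{b}, \mathbf{c}, \mathbf{e}$. Within $T_1$ (with sides $a,b,e$) and $T_2$ (with sides $c,b,e$) the arcs of $\beta$ are the usual normal arcs, and the number cutting off each corner is given by the corner formula, e.g. $(\mathbf{b}+\mathbf{e}-\mathbf{a})/2$ arcs joining $b$ to $e$ in $T_1$; these counts are non-negative precisely because $\beta$ is a curve. I would parametrise a core representative $\alpha'$ by the two slots $p$ and $q$ at which it crosses $e$ and $b$, write the number of crossings $X(p,q)$ as the sum of the contributions of the six normal-arc types (each a clipped linear function of $p$ and $q$), and minimise over $0 \le p \le \mathbf{e}$ and $0 \le q \le \mathbf{b}$.

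The crux is this last minimisation, which I expect to be the main obstacle: one must check that $\min_{p,q} X(p,q)$ equals $\max(\mathbf{e}-\mathbf{b},\ \mathbf{b}-\mathbf{e},\ \mathbf{a}+\mathbf{c}-\mathbf{b}-\mathbf{e})$. The three terms correspond to pushing $\alpha'$ hard against $e$, against $b$, or into the interior so that it meets only the strands running between $a$ and $c$; dually, each is a flux of $\beta$ that any core curve is forced to cross, and the triangle inequalities satisfied by $\mathbf{a},\mathbf{b},\mathbf{c},\mathbf{e}$ on $T_1$ and $T_2$ guarantee that these three are the only binding constraints, so that the largest of them is attained. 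This is a finite piecewise-linear (min-cut / max-flow) calculation; the remaining steps are routine bookkeeping.
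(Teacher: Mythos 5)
Your reduction to the annulus is sound and is a clean way to organise the computation: the square of Figure~\ref{fig:non_isolating} glued along the two copies of $b$ is indeed an annulus with core $\alpha$, and $\intersection(\alpha,\beta)$ does equal the number $N$ of arcs of $\beta$ in that annulus running from $a$ to $c$ (the paper itself offers no argument here, treating the whole proposition as immediate from the figure, so this part of your write-up is if anything more careful than the source). The problem is that the proposition \emph{is} the formula, and you stop exactly where the formula would have to be derived. You acknowledge that the minimisation of $X(p,q)$ is ``the crux'' and ``the main obstacle'' and then declare it routine bookkeeping; that is the gap. Nothing you have written certifies that $\min_{p,q}X(p,q)$, or equivalently $N$, equals $\max(\mathbf{e}-\mathbf{b},\,\mathbf{b}-\mathbf{e},\,\mathbf{a}+\mathbf{c}-\mathbf{b}-\mathbf{e})$.

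Moreover the heuristic identifications you offer for the three terms do not survive inspection, which is a warning that the omitted step is not mere bookkeeping. Both $e$ and $b$ are spanning arcs of the annulus, so every core representative must cross each of them; ``pushing $\alpha'$ hard against $e$'' is not an available position, and the positions one can actually push to (e.g.\ crossing $e$ and $b$ adjacent to the corner so as to separate off the edge $a$ or the edge $c$) give the values $\mathbf{a}$ and $\mathbf{c}$, which are not terms of the formula. What is true, and what you should prove, is a matched pair of bounds. For the lower bounds: every arc of $\beta\cap\mathcal{C}$ with both endpoints on $a$ or both on $c$ crosses $e\cup b$ at least twice and every traversing arc crosses it at least once, which with $2N_{aa}+N=\mathbf{a}$ and $2N_{cc}+N=\mathbf{c}$ gives $N\ge\mathbf{a}+\mathbf{c}-\mathbf{b}-\mathbf{e}$; a separate argument (tracking how consecutive crossings of $e$ and $b$ along a strand are forced to pair up, or equivalently the non-negativity of the corner coordinates in each triangle) gives $N\ge|\mathbf{e}-\mathbf{b}|$. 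For the upper bound one must exhibit a position $(p,q)$ of the core realising whichever of the three terms is largest, which requires writing out the six clipped linear contributions explicitly and checking the case division governed by the signs of $\mathbf{a}+\mathbf{b}-\mathbf{e}$, $\mathbf{c}+\mathbf{b}-\mathbf{e}$, etc. Until that is done the proof is a correct setup with the conclusion asserted rather than established.
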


Thus we focus the remainder of this section on the case in which $\alpha$ is isolating.

\subsection{Combinatorial restrictions}

When $\alpha$ is isolating there are again many restrictions on its combinatorics.
The argument of \cite[Section~2.4.2]{BellThesis} shows that $\intersection(\alpha, e) \in \{0, 2\}$ for every edge $e$ of $\calT$.
This means that in each triangle $\alpha$ must appear either as a \emph{tripod} or \emph{corridor}, as shown in Figure~\ref{fig:tripod} and Figure~\ref{fig:corridor} respectively.

\begin{figure}[ht]
	\centering
	\begin{subfigure}[b]{0.45\textwidth}
		\centering
		\begin{tikzpicture}[scale=3,thick]

\coordinate (A) at (0,0);
\coordinate (B) at (1,0);
\coordinate (C) at ($(0.5,{sqrt(3) / 2})$);
\draw (A) -- (B) -- (C) -- (A);

\draw [red] ($(A)!0.3!(B)$) to [out=330, in=90] ($(A)!0.3!(C)$);
\draw [red] ($(B)!0.3!(C)$) to [out=210, in=90] ($(B)!0.3!(A)$);
\draw [red] ($(C)!0.3!(A)$) to [out=330, in=210] ($(C)!0.3!(B)$);

\end{tikzpicture}
		\caption{A tripod in a triangle.}
		\label{fig:tripod}
	\end{subfigure}
	~
	\begin{subfigure}[b]{0.45\textwidth}
		\centering
		\begin{tikzpicture}[scale=3,thick]

\coordinate (A) at (0,0);
\coordinate (B) at (1,0);
\coordinate (C) at ($(0.5,{sqrt(3) / 2})$);
\draw (A) -- (B) -- (C) -- (A);

\draw [red] ($(B)!0.3!(C)$) to [out=210, in=330] ($(A)!0.3!(C)$);
\draw [red] ($(C)!0.3!(A)$) to [out=330, in=210] ($(C)!0.3!(B)$);

\end{tikzpicture}
		\caption{A corridor in a triangle.}
		\label{fig:corridor}
	\end{subfigure}
	\caption{An isolating curve $\alpha$ in an $\alpha$--minimal triangulation.}
\end{figure}
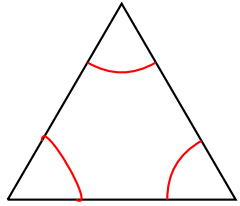
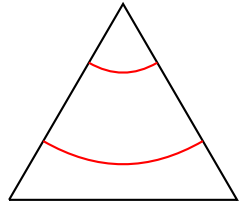

\begin{proposition}
\label{prop:tripod_corridor}
Let $g'$ be the genus of the component of $S - \alpha$ which does not contain any punctures.
Then $\alpha$ appears as a corridor in exactly one triangle of $\calT$ and as a tripod in exactly $4g' - 2$ triangles of $\calT$.
\end{proposition}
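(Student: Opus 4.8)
The plan is to cut $S$ along $\alpha$ into its two components $S_0$ (genus $g'$, no punctures) and $S_1$ (carrying every puncture), and to analyse how the triangles of $\calT$ are subdivided. First I would record that, being isolating and simple, $\alpha$ is separating, so $S - \alpha = S_0 \sqcup S_1$ with $\partial S_0 = \partial S_1 = \alpha$, and that every vertex of $\calT$ is a puncture and hence lies in $\overline{S_1}$. Looking at Figure~\ref{fig:tripod}, the three arcs of a tripod cut off three corners, each carrying a vertex and so lying in $S_1$, while the central hexagon contains no vertex and is separated from each corner by an arc of $\alpha$, so it lies in $S_0$. Likewise in Figure~\ref{fig:corridor} the two end regions of a corridor carry the vertices and lie in $S_1$, whereas the central strip lies in $S_0$. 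Since $S_0$ contains no puncture, no whole triangle can lie inside it; hence $S_0$ is tiled precisely by the $t$ central hexagons and the $c$ central strips, where $t$ and $c$ denote the numbers of tripods and corridors.

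Next I would compute $\chi(S_0)$ from this decomposition, taking the $0$--cells to be the points of $\alpha \cap \calT^{(1)}$, the $1$--cells to be the sub-arcs of $\alpha$ together with the middle sub-segments of the edges meeting $\alpha$, and the $2$--cells to be the hexagons and strips. Each edge carrying $\alpha$ contributes two crossing points and one middle segment to $S_0$; each crossing point and each middle segment is shared by exactly two tiles, while each boundary arc belongs to a single tile. Counting incidences (a hexagon has six corners, three arcs and three segments; a strip has four, two and two) gives $V = 3t+2c$, $E = \tfrac32(3t+2c)$ and $F = t+c$, whence $\chi(S_0) = V - E + F = -t/2$. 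As $S_0$ is a genus-$g'$ surface with a single boundary circle, $\chi(S_0) = 1-2g'$, and therefore $t = 4g'-2$. Note that $c$ drops out of this computation, so the corridor count is \emph{not} forced by the topology of $S_0$ and must instead come from minimality.

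It remains to determine $c$. On any $\alpha$--minimal triangulation $\intersection(\alpha, \calT) = \sum_e \intersection(\alpha, e) = 3t + 2c$, since a tripod meets $\alpha$ in all three of its edges and a corridor in exactly two; because $t = 4g'-2$ is fixed, all such triangulations realise the common minimal value with the same corridor count $c$. For the lower bound I would argue that $c = 0$ is impossible: were there no corridor, every edge of every tripod would carry $\alpha$ and hence border a second tripod, so the tripods would form a closed subcomplex, which—$S$ being connected—would be all of $S$; then, writing $n$ for the number of punctures, $\chi(S) = n - \zeta + \tfrac{2\zeta}{3} = n - \tfrac{\zeta}{3} = n + \chi(S)$ (using $\zeta = -3\chi(S)$), forcing $n = 0$ and contradicting that $S$ is punctured. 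Hence $c \geq 1$.

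The matching bound $c \leq 1$ is where $\alpha$--minimality does the real work, and I expect it to be the main obstacle. The plan is to show that $c \geq 2$ always admits an intersection--reducing move, contradicting minimality. A maximal run of consecutive strips forms a genuine corridor along which $\alpha$ travels parallel to itself; across such a region a suitable quadrilateral has its opposite diagonal meeting $\alpha$ strictly fewer times, so a single flip lowers $\intersection(\alpha, \calT)$—and when the run closes up around a short curve, a power of the Dehn twist of \cite{BellSimplifying} does so instead. The delicate point is to verify that a reducing move exists whenever $c \geq 2$ while none can remove the final corridor (in agreement with $c \geq 1$ above); this requires a careful analysis of how the two arcs of each strip continue through its bounding edges and of the interface with the adjacent tripods. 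Granting this local case check, minimality forces $c = 1$, completing the proof.
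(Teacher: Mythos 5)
The main step---that $\alpha$ has \emph{at most} one corridor---is precisely the point you label ``the delicate point'' and then grant yourself, so as written this is a genuine gap rather than a proof. Moreover your sketch for closing it points in an unpromising direction: two corridor triangles need not be joined by a ``run of consecutive strips'', and no single local flip need reduce $\intersection(\alpha, \calT)$. The paper's argument is global. Assuming two corridors, one takes the arc $e$ of Figure~\ref{fig:two_corridors}, running from a puncture of one corridor triangle, alongside $\alpha$, to a puncture of the other; this $e$ is disjoint from $\alpha$. One then flips, one at a time, the edges of $\calT$ crossed by $e$: each flip strictly decreases $\intersection(e, \calT)$ and never increases $\intersection(\alpha, \calT)$ \cite[Page~38]{MosherFoliations}, so after at most $2\zeta$ flips one reaches a triangulation $\calT'$ containing $e$ as an edge, and then $\intersection(\alpha, \calT') < \intersection(\alpha, \calT)$, contradicting $\alpha$--minimality. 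Some argument of this kind (or a completed version of your case analysis) is needed; without it the proposition is not proved.

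Two further remarks. Your tripod count is correct and is the paper's (terse) Euler characteristic argument worked out in detail; that part stands. But your argument that $c \geq 1$ is also broken: the quantity $n - \zeta + \tfrac{2\zeta}{3}$ is the Euler characteristic of the \emph{closed} surface obtained by filling in the punctures, not of $S$ (the punctures are not points of $S$ and cannot serve as $0$--cells), so your identity reduces to $\chi(\bar S) = \chi(S) + n$ and yields no contradiction---indeed your computation never uses the hypothesis that every triangle is a tripod, which is a sign something is off. The correct and simpler observation is the paper's: if $\alpha$ appeared as a tripod in every triangle it meets, its corner arcs would concatenate into loops encircling punctures, so $\alpha$ would be peripheral, contradicting essentiality.
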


\begin{proof}
Clearly $\alpha$ must appear as a tripod in exactly $4g' - 2$ triangles due to the Euler characteristic of the unpunctured component.
Furthermore, if $\alpha$ did not appear as a corridor in any triangle of $\calT$ then it would be peripheral.
Hence it only remains to show that $\alpha$ appears as a corridor in at most one triangle.

Suppose instead that there are two triangles in which $\alpha$ appears as a corridor.
Let $e$ be the arc shown in Figure~\ref{fig:two_corridors}, which follows around $\alpha$ from the outside of one corridor to the other before connecting to a puncture.

\begin{figure}[ht]
	\centering
	\begin{tikzpicture}[scale=3,thick]

\tikzset{midarrow/.style={
decoration={markings,mark=at position 0.5 with {\arrow{>}}},
postaction={decorate}
}}

\coordinate (A) at (0,0);
\coordinate (B) at (1,0);
\coordinate (C) at ($(0.5,{sqrt(3) / 2})$);
\draw (A) -- (B) -- (C) -- (A);

\draw [red] ($(B)!0.3!(C)$) to [out=210, in=330] ($(A)!0.3!(C)$);
\draw [red] ($(C)!0.3!(A)$) to [out=330, in=210] ($(C)!0.3!(B)$);

\coordinate (A2) at (1.5,0);
\coordinate (B2) at (2.5,0);
\coordinate (C2) at ($(2,{sqrt(3) / 2})$);
\draw (A2) -- (B2) -- (C2) -- (A2);

\draw [red] ($(B2)!0.3!(C2)$) to [out=210, in=330] ($(A2)!0.3!(C2)$);
\draw [red] ($(C2)!0.3!(A2)$) to [out=330, in=210] ($(C2)!0.3!(B2)$);

\draw [blue] (A) to [out=30, in=180] ($(A)!0.3!(C) + (280:0.1)$);
\draw [blue,midarrow] ($(A)!0.3!(C) + (280:0.1)$) to [out=330,in=210] ($(B)!0.2!(C)$) node [right] {$e$};

\draw [blue, midarrow] ($(A2)!0.2!(C2)$) node[left] {$e$} to [out=330,in=210] ($(B2)!0.3!(C2) + (260:0.1)$);
\draw [blue] ($(B2)!0.3!(C2) + (260:0.1)$) to [out=0,in=150] (B2);

\end{tikzpicture}
	\caption{A disjoint arc $e$ when $\alpha$ has two corridors.}
	\label{fig:two_corridors}
\end{figure}
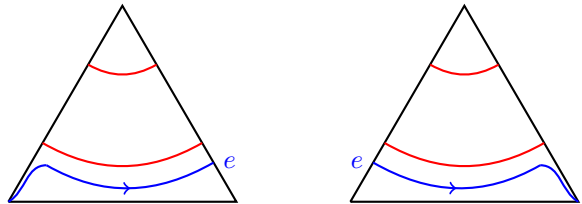

We follow along $e$, flipping each edge of $\calT$ that we meet along the way.
Each flip reduces $\intersection(e, \calT)$ but does not increase $\intersection(\alpha, \calT)$ \cite[Page~38]{MosherFoliations}.
Thus, after performing at most $2 \zeta$ flips, we finish with a triangulation $\calT'$ which contains $e$ as an edge.
However, since $e$ is disjoint from $\alpha$ we must have that $\intersection(\alpha, \calT') < \intersection(\alpha, \calT)$.
This contradicts the fact that $\calT$ was $\alpha$--minimal.
\end{proof}

\subsection{Linear programming}

To finish the case in which $\alpha$ is isolating we formulate a collection of integer linear programming problems.
An optimal solution over all of these problems will then correspond to a minimal position.
The number of problems and, thanks to the fact that these curves are given on an $\alpha$--minimal triangulation, the number of variables involved will be bounded only in terms of $\zeta$.

To do this we first assign an orientation to each edge of $\calT$.
Additionally, for ease of notation throughout this section let $\mathbf{e_i} \defeq \intersection(\beta, e_i)$.

Fix $\mathfrak{b} \in \beta$ to be a representative which meets $\calT$ minimally.
Without loss of generality we may draw $\mathfrak{b}$ as a collection of straight line segments in each triangle.

Now for each edge $e_i$, choose $x_i, y_i \in \NN_0$ such that $x_i + y_i \leq \mathbf{e_i}$.
We construct a representative $\mathfrak{a} \in \alpha$ from these variables as follows.
\begin{enumerate}
\item If $\alpha$ meets the edge $e_i$ then we place two marks on the edge.
Walking along $e_i$ in the direction of its orientation, we place the first mark just after we encounter the $x_i\nth{}$ point of $\mathfrak{b}$.
Similarly, walking along $e_i$ in the reverse direction, we place the second mark just after we encounter the $y_i\nth{}$ point of $\mathfrak{b}$.
\item Each triangle now has exactly $0$, $4$ or $6$ marks on its boundary.
In the first case we do nothing in this triangle.
In the second and third cases we connect these via straight line segments to form a corridor or tripod respectively.
\item The union of these segments is our representative $\mathfrak{a} \in \alpha$.
\end{enumerate}
For example, see Figure~\ref{fig:tripod_constraints} where $x_i = 5$, $y_i = 3$, $x_j = 4$, $y_j = 3$, $x_k = 3$ and $y_k = 3$.
Alternatively, in the example shown in Figure~\ref{fig:corridor_constraints} we have that $x_i = 0$, $y_i = 0$, $x_j = 4$, $y_j = 3$, $x_k = 2$ and $y_k = 2$ but $\intersection(\alpha, e_i) = 0$.

\begin{figure}[ht]
	\centering
	\begin{subfigure}[b]{0.45\textwidth}
		\centering
		\begin{tikzpicture}[scale=3,thick]

\tikzset{midarrow/.style={
decoration={markings,mark=at position 0.5 with {\arrow{>}}},
postaction={decorate}
}}

\coordinate (A) at (0,0);
\coordinate (B) at (1,0);
\coordinate (C) at ($(0.5,{sqrt(3) / 2})$);
\draw [midarrow] (A) -- node [below] {$e_i$} (B);
\draw [midarrow] (B) -- node [above, xshift=5pt] {$e_j$} (C);
\draw [midarrow] (C) -- node [above, xshift=-5pt] {$e_k$} (A);

\draw [decorate,decoration={brace,amplitude=5pt,raise=1pt}] ($(A)!0.52!(B)$) -- ($(A)!0.08!(B)$) node [midway,yshift=-12pt] {$x_i$};
\draw [decorate,decoration={brace,amplitude=5pt,raise=1pt}] ($(A)!0.92!(B)$) -- ($(A)!0.68!(B)$) node [midway,yshift=-12pt] {$y_i$};

\draw [decorate,decoration={brace,amplitude=5pt,raise=1pt}] ($(B)!0.42!(C)$) -- ($(B)!0.08!(C)$) node [midway,xshift=12pt,yshift=5pt] {$x_j$};
\draw [decorate,decoration={brace,amplitude=5pt,raise=1pt}] ($(B)!0.92!(C)$) -- ($(B)!0.68!(C)$) node [midway,xshift=12pt,yshift=5pt] {$y_j$};

\draw [decorate,decoration={brace,amplitude=5pt,raise=1pt}] ($(C)!0.32!(A)$) -- ($(C)!0.08!(A)$) node [midway,xshift=-12pt,yshift=5pt] {$x_k$};
\draw [decorate,decoration={brace,amplitude=5pt,raise=1pt}] ($(C)!0.92!(A)$) -- ($(C)!0.68!(A)$) node [midway,xshift=-12pt,yshift=5pt] {$y_k$};

\foreach \i in {0.1,0.2,0.3,0.4} do \draw [blue] ($(A)!\i!(B)$) to ($(A)!\i!(C)$);
\foreach \i in {0.1,0.2,0.3,0.4,0.5} do \draw [blue] ($(B)!\i!(C)$) to ($(B)!\i!(A)$);
\foreach \i in {0.1,0.2,0.3,0.4} do \draw [blue] ($(C)!\i!(A)$) to ($(C)!\i!(B)$);

\draw  [red, ultra thick] ($(A)!0.55!(B)$) to ($(A)!0.35!(C)$);
\draw  [red, ultra thick] ($(B)!0.45!(C)$) to ($(B)!0.35!(A)$);
\draw  [red, ultra thick] ($(C)!0.35!(A)$) to ($(C)!0.35!(B)$);

\end{tikzpicture}
		\caption{When $\alpha$ forms a tripod.}
		\label{fig:tripod_constraints}
	\end{subfigure}
	~
	\begin{subfigure}[b]{0.45\textwidth}
		\centering
		\begin{tikzpicture}[scale=3,thick]

\tikzset{midarrow/.style={
decoration={markings,mark=at position 0.5 with {\arrow{>}}},
postaction={decorate}
}}

\coordinate (A) at (0,0);
\coordinate (B) at (1,0);
\coordinate (C) at ($(0.5,{sqrt(3) / 2})$);
\draw [midarrow] (A) -- node [below] {$e_i$} (B);
\draw [midarrow] (B) -- node [above right] {$e_j$} (C);
\draw [midarrow] (C) -- node [above left] {$e_k$} (A);

\draw [decorate,decoration={brace,amplitude=5pt,raise=1pt}] ($(A)!0.42!(B)$) -- ($(A)!0.08!(B)$) node [midway,yshift=-12pt] {$z_j$};
\draw [decorate,decoration={brace,amplitude=5pt,raise=1pt}] ($(B)!0.52!(C)$) -- ($(B)!0.08!(C)$) node [midway,xshift=12pt,yshift=5pt] {$z_k$};
\draw [decorate,decoration={brace,amplitude=5pt,raise=1pt}] ($(C)!0.32!(A)$) -- ($(C)!0.08!(A)$) node [midway,xshift=-12pt,yshift=5pt] {$z_i$};

\foreach \i in {0.1,0.2,0.3,0.4} do \draw [blue] ($(A)!\i!(B)$) to ($(A)!\i!(C)$);
\foreach \i in {0.1,0.2,0.3,0.4,0.5} do \draw [blue] ($(B)!\i!(C)$) to ($(B)!\i!(A)$);
\foreach \i in {0.1,0.2,0.3} do \draw [blue] ($(C)!\i!(A)$) to ($(C)!\i!(B)$);

\draw  [red, ultra thick] ($(C)!0.75!(A)$) to ($(C)!0.55!(B)$);
\draw  [red, ultra thick] ($(C)!0.45!(A)$) to ($(C)!0.35!(B)$);

\end{tikzpicture}
		\caption{When $\alpha$ forms a corridor.}
		\label{fig:corridor_constraints}
	\end{subfigure}
	\caption{Positioning $\mathfrak{a}$ over $\mathfrak{b}$.}
	\label{rfidtag_testing}
\end{figure}

\begin{proposition}
\label{prop:intersection_PL}
The intersection number $\intersection(\mathfrak{a}, \mathfrak{b})$ is a piecewise linear function of $x_1, y_1, \ldots, x_\zeta, y_\zeta$.
\end{proposition}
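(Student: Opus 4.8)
The plan is to compute $\intersection(\mathfrak{a}, \mathfrak{b})$ as an honest count of transverse intersection points between the two constructed representatives and to show that this count is given, on each region of a polyhedral subdivision of the parameter space, by an affine function of $x_1, y_1, \ldots, x_\zeta, y_\zeta$. The first step is to \emph{localise}: by construction each mark defining $\mathfrak{a}$ is placed strictly between consecutive points of $\mathfrak{b}$ on an edge, so $\mathfrak{a}$ and $\mathfrak{b}$ are disjoint along the edges of $\calT$ and every intersection point lies in the interior of some triangle. Hence $\intersection(\mathfrak{a}, \mathfrak{b})$ is the sum, over the triangles of $\calT$, of the number of interior crossings in each triangle, and it suffices to prove that each such local count is a piecewise linear function of the six parameters attached to the edges of that triangle.

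Inside a single triangle both $\mathfrak{a}$ and $\mathfrak{b}$ are unions of straight chords of a convex region, so two chords cross if and only if their four endpoints alternate around the boundary. The local count is therefore the number of linked pairs, and I would compute it one $\mathfrak{a}$-chord at a time. Each $\mathfrak{a}$-chord cuts off a single corner, say the one at vertex $C$ between edges $e_j$ and $e_k$; it meets $e_j$ at a mark whose position I record as the number $a$ of points of $\mathfrak{b}$ lying between it and $C$, and similarly it meets $e_k$ at distance $b$ from $C$. The key observation is that, by the placement rule for the marks, each of $a$ and $b$ is an affine function of the corresponding parameter (it is $x_i$, or $\mathbf{e_i} - y_i$, according to the chosen orientation of the edge), so an affine substitution reduces everything to the variables $a$ and $b$.

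I would then split the $\mathfrak{b}$-chords according to which of the three corners they cut off and count, for each type, how many are linked with the given $\mathfrak{a}$-chord. Writing $n_A, n_B, n_C$ for the numbers of $\mathfrak{b}$-chords cutting the corners at $A, B, C$ (constants determined by the $\mathbf{e_i}$), the chords through corner $C$ are nested, and a short check shows that exactly $|\min(a, n_C) - \min(b, n_C)|$ of them are linked with the $\mathfrak{a}$-chord; the chords through $A$ then contribute $\min(n_A, \max(0, a - n_C))$ and those through $B$ contribute $\min(n_B, \max(0, b - n_C))$. Each of these expressions is built from affine functions by $\min$, $\max$ and absolute value and so is piecewise linear in $a$ and $b$, hence in the original parameters. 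Summing these contributions over the (at most three) chords of $\mathfrak{a}$ in each triangle, and then over all triangles, yields a piecewise linear function, since the sum and the common refinement of finitely many piecewise linear functions is again piecewise linear.

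The main obstacle is purely bookkeeping in the third step: one must pin down, for each combinatorial type of $\mathfrak{a}$-arc (the three legs of a tripod and the two arcs of a corridor) and each corner-type of $\mathfrak{b}$-arc, precisely which chords are linked, and track the orientation conventions that convert between the geometric distances $a, b$ and the algebraic variables $x_i, y_i$. None of this is genuinely difficult once the convex-chord linking criterion is in hand, since every count is ultimately the number of $\mathfrak{b}$-points in an interval whose endpoints are affine in the parameters; the only care needed is in the boundary cases where a mark lies beyond all the $\mathfrak{b}$-points of a given corner-type, which is exactly what the $\min$ and $\max$ truncations above are designed to record.
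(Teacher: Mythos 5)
Your proposal is correct and follows essentially the same route as the paper: reduce to a per-triangle, per-segment count and observe that each segment of $\mathfrak{a}$ contributes a piecewise linear function of the mark positions, which are affine in the $x_i, y_i$. Your corner-by-corner linking count $|\min(a,n_C)-\min(b,n_C)|+\min(n_A,\max(0,a-n_C))+\min(n_B,\max(0,b-n_C))$ simplifies (using $a,b\le n_C+n_A$, $n_C+n_B$) to exactly the paper's case formula $|x_k-y_j|$ versus $x_k+y_j-2\mathbf{z_i}$, so the two arguments agree.
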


\begin{proof}
Consider a single triangle of $\calT$ with sides $e_i$, $e_j$ and $e_k$.
Let $\mathbf{z_i}$, $\mathbf{z_j}$ and $\mathbf{z_k}$ denote the number of segments of $\mathfrak{b}$ running through this triangle parallel to the specified edge, as shown in Figure~\ref{fig:corridor_constraints}.
That is,
\[
\mathbf{z_i} \defeq \frac{1}{2}(\mathbf{e_j} + \mathbf{e_k} - \mathbf{e_i}), \;
\mathbf{z_j} \defeq \frac{1}{2}(\mathbf{e_i} + \mathbf{e_k} - \mathbf{e_j}) \; \textrm{and} \;
\mathbf{z_k} \defeq \frac{1}{2}(\mathbf{e_i} + \mathbf{e_j} - \mathbf{e_k}).
\]
Now consider a single segment $I$ of $\mathfrak{a}$ in this triangle which, without loss of generality, connects from $e_j$ to $e_k$.

If $I$ is part of a tripod, as shown in Figure~\ref{fig:tripod_constraints}, or is the segment of a corridor which is furthest from $e_i$, as shown in Figure~\ref{fig:corridor_constraints}, then:
\[ \intersection(I, \mathfrak{b}) = \begin{cases}
|x_k - y_j| & \textrm{if} \; x_k \leq \mathbf{z_i} \; \textrm{or} \; y_j \leq \mathbf{z_i} \\
x_k + y_j - 2\mathbf{z_i} & \textrm{otherwise}.
\end{cases} \]
Similarly, if $I$ is the segment of a corridor which is closest to $e_i$ then:
\[ \intersection(I, \mathfrak{b}) = \begin{cases}
|(\mathbf{e_j} - y_k) - (\mathbf{e_j} - x_j)| & \textrm{if} \; \mathbf{e_j} - y_k \leq \mathbf{z_i} \; \textrm{or} \; \mathbf{e_j} - x_j \leq \mathbf{z_i} \\
(\mathbf{e_j} - y_k) + (\mathbf{e_j} - x_j) - 2\mathbf{z_i} & \textrm{otherwise}.
\end{cases} \]
Note that both formulae are dependent on the orientations of $e_i$, $e_j$ and $e_k$ matching those in Figure~\ref{fig:tripod_constraints} and Figure~\ref{fig:corridor_constraints}.
In the event that the orientation on $e_i$ does not match, for example, the variables $x_i$ and $y_i$ must be interchanged.

In either case, this is a piecewise linear function of $x_1, y_1, \ldots, x_\zeta, y_\zeta$.
Hence by summing these functions over all segments in all triangles we see that $\intersection(\mathfrak{a}, \mathfrak{b})$ is a piecewise linear function of $x_1, y_1, \ldots, x_\zeta, y_\zeta$ too.
\end{proof}

We will denote this piecewise-linear function by $f$ and so
\[ \intersection(\mathfrak{a}, \mathfrak{b}) = f(x_1, y_1, \ldots, x_\zeta, y_\zeta). \]

Now any representative of $\alpha$ which is in minimal position with respect to $\mathfrak{b}$ is isotopic, relative to $\mathfrak{b}$, to some representative constructed by the above procedure.
Therefore there is a choice of $x_1, y_1, \ldots, x_\zeta, y_\zeta$ such that the corresponding $\mathfrak{a}$ is in minimal position with respect to $\mathfrak{b}$.
Thus the task of putting $\alpha$ in minimal position with respect to $\beta$ is equivalent to finding a minimum of $f$.

To find such a minimum we consider each piece of $f$ in turn.
We can formulate the problem of finding a minimum of $f$ on a piece as an integer linear programming problem.
There are many algorithms for solving such problems and, while they are $\mathbf{NP}$-complete in general \cite{GathenSieveking} \cite{GareyJohnson}, these can be solved in polynomial time as we have a fixed number of variables:

\begin{theorem}[{\cite{Eisenbrand} \cite{Lenstra}}]
\label{thrm:integer_LP}
Suppose that $m_0$ is fixed. There is an algorithm which, given a matrix $A \in \ZZ^{n \times m_0}$, vector $b \in \ZZ^{n \times 1}$ and vector $c \in \ZZ^{1 \times m_0}$ finds an optimal solution $x \in \ZZ^{m_0 \times 1}$ to the integer linear programming problem:
\begin{eqnarray*}
\textrm{Minimise} & c \cdot x \\
\textrm{Subject to} & A \cdot x \geq b.
\end{eqnarray*}
Moreover, this algorithm runs in polynomial time in the bit-size of $A$, $b$ and $c$. \qed
\end{theorem}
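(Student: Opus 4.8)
The plan is to reduce the optimization problem to a sequence of feasibility problems, and then to solve integer feasibility by induction on the dimension $m_0$ using the geometry of numbers; this is the approach of Lenstra, with the refinements of Eisenbrand controlling the running time. Throughout, the feasible region $P \defeq \{x \in \mathbb{R}^{m_0} : A \cdot x \geq b\}$ is a rational polyhedron, and I would first record the standard fact (via Cramer's rule applied to its vertices) that every bit-size appearing in the argument stays polynomially bounded in the bit-size of $A$, $b$ and $c$.

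First I would handle the reduction from optimization to feasibility. Using ordinary linear programming, which is solvable in polynomial time by the ellipsoid method, I would detect whether $P$ is empty or whether $c \cdot x$ is unbounded below on $P$. In the remaining case the optimal value is a rational of polynomially bounded bit-size, so I would binary search on a threshold $t$: augment the system with the constraint $c \cdot x \leq t$ and test whether the enlarged polyhedron contains a lattice point. Since the range of $t$ and the precision required are polynomially bounded, only polynomially many integer feasibility tests are needed.

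Second, the heart of the matter is deciding whether a rational polytope $P$ (which we may assume bounded, after intersecting with a box of polynomially bounded size) contains a point of $\ZZ^{m_0}$, and exhibiting one if it does. Here I would invoke Khinchin's flatness theorem: there is a constant $w(m_0)$, depending only on the dimension, such that any convex body either contains a lattice point or admits a nonzero integer direction $d$ with $\max_{x \in P} d \cdot x - \min_{x \in P} d \cdot x \leq w(m_0)$. To turn this into an algorithm I would first compute an affine change of coordinates (via a L\"owner--John ellipsoid, obtainable from the ellipsoid method) after which $P$ is sandwiched between two concentric balls of bounded ratio, and then apply the LLL algorithm to the resulting lattice. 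From a reduced basis one can either read off a lattice point lying inside $P$, or produce an integer direction $d$ witnessing that $P$ is flat. In the flat case the lattice points of $P$ lie on at most $w(m_0) + 1$ parallel hyperplanes $d \cdot x = k$, and on each such hyperplane the problem becomes integer feasibility in dimension $m_0 - 1$, to which we recurse.

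The recursion decreases the dimension by one at each level and branches into at most $w(m_0) + 1$ subproblems, so the total number of leaves is bounded by a constant depending only on $m_0$; each node performs polynomially many arithmetic operations with polynomially bounded intermediate data, which together with the binary search yields the claimed polynomial running time. The main obstacle is precisely the geometry-of-numbers step: establishing the flatness theorem with a bound $w(m_0)$ that depends on the dimension alone, and making it effective through lattice basis reduction so that the flat direction (or an interior lattice point) is found in polynomial time. Once that core is in place, the optimization-to-feasibility reduction and the bookkeeping of bit-sizes are routine.
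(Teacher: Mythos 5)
The paper gives no proof of this statement at all---it is imported as a black box with a citation to Lenstra and to Eisenbrand---and your outline is a faithful reconstruction of precisely the argument those sources contain: reduce optimization to integer feasibility by binary search on the objective value (justified by polynomial bounds on the bit-size of an optimal solution), then settle feasibility in fixed dimension via Khinchin's flatness theorem, made effective through a L\"owner--John ellipsoid and LLL basis reduction, recursing on at most $w(m_0)+1$ parallel lower-dimensional slices so that the tree has constantly many leaves. Since you correctly isolate the effective flatness theorem as the one non-routine ingredient and the rest of your reduction is sound, this matches the intended (cited) proof and there is nothing to correct.
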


The bit-size of each of the above integer linear programming problems is at most $\log(\intersection(\beta, \calT))$.
Therefore by using the algorithm of Theorem~\ref{thrm:integer_LP} we can solve each of these problems in at most $O(\poly(\log(\intersection(\beta, \calT))))$ operations.

Finally, note that $\intersection(I, \mathfrak{b})$ from the proof of Proposition~\ref{prop:intersection_PL} is a piecewise linear function with $5$ pieces.
Therefore the number of intersections occurring in a triangle is a piecewise linear function with at most $5^3$ pieces.
Thus $f$ has at most $5^{2 \zeta}$ pieces and so there are at most $5^{2\zeta} \in O(1)$ such problems we must consider.
Hence we can also find the minimal solution over all problems, and so a minimum of $f$, in polynomial time in the bit-size of $\calT(\beta)$.

\begin{corollary}
Suppose we are given $\calT(\alpha)$ and $\calT(\beta)$ where $\calT$ is an $\alpha$--minimal triangulation.
Then we can compute the minimal position for $\alpha$ relative to $\beta$, and so $\intersection(\alpha, \beta)$, in polynomial time in the bit-size of $\calT(\beta)$. \qed
\end{corollary}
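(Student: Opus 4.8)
The plan is to treat the two cases separately according to whether $\alpha$ is non-isolating or isolating, in each case exhibiting an explicit algorithm together with a bound on its running time. A preliminary observation that underpins both cases is that, since $\calT$ is $\alpha$-minimal, every entry of $\calT(\alpha)$ lies in $\{0, 2\}$; thus $\calT(\alpha)$ has bounded bit-size and the overall complexity can depend only on the bit-size of $\calT(\beta)$, exactly as the statement asserts.

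If $\alpha$ is non-isolating then the result is immediate. By the earlier Proposition the intersection number is given by the closed form $\intersection(\alpha, \beta) = \max(\mathbf{e} - \mathbf{b}, \mathbf{b} - \mathbf{e}, \mathbf{a} + \mathbf{c} - \mathbf{b} - \mathbf{e})$, and evaluating this maximum requires only a constant number of additions, subtractions and comparisons on integers whose bit-size is at most that of $\calT(\beta)$. Hence this case runs in polynomial (indeed linear) time, and a minimal position can be read off directly from Figure~\ref{fig:non_isolating}.

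The substance lies in the isolating case. Here I would first recall that putting $\alpha$ into minimal position with respect to $\mathfrak{b}$ is equivalent to minimising the piecewise-linear function $f(x_1, y_1, \ldots, x_\zeta, y_\zeta)$ of Proposition~\ref{prop:intersection_PL} over the integer points of the feasible region cut out by the inequalities $x_i, y_i \geq 0$ and $x_i + y_i \leq \mathbf{e_i}$. The crucial structural input, furnished by Proposition~\ref{prop:tripod_corridor} together with the fact that $\intersection(\alpha, e) \in \{0, 2\}$, is that the number of variables is exactly $2\zeta$, which for a fixed surface $S$ is a constant. I would then subdivide the domain of $f$ into its linear pieces: each segment of $\mathfrak{a}$ contributes a formula with $5$ pieces, each triangle therefore contributes at most $5^3$ pieces, and so $f$ has at most $5^{2\zeta}$ pieces overall. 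On any single piece both the objective $f$ and the defining conditions (the case inequalities such as $x_k \leq \mathbf{z_i}$ together with the feasibility constraints) are linear in the $2\zeta$ variables, so minimising $f$ there is precisely an integer linear programming problem of the form treated by Theorem~\ref{thrm:integer_LP}.

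To conclude, I would take $m_0 = 2\zeta$ fixed and invoke Theorem~\ref{thrm:integer_LP}, which solves each such problem in time polynomial in its bit-size; since this bit-size is $O(\log(\intersection(\beta, \calT)))$ it is polynomial in the bit-size of $\calT(\beta)$. As there are at most $5^{2\zeta} \in O(1)$ pieces, one solves all of the associated problems and returns the smallest optimal value, which is the minimum of $f$ and hence determines both a minimal position and $\intersection(\alpha, \beta)$. The total cost is a constant number of polynomial-time solves, so the whole procedure runs in polynomial time in the bit-size of $\calT(\beta)$. I expect the main obstacle to be the careful bookkeeping required to convert $f$---with its absolute values, case splits and dependence on the chosen edge orientations---into an honest finite list of integer linear programs, verifying in particular that each linear piece is genuinely carved out by linear inequalities so that Theorem~\ref{thrm:integer_LP} applies on the nose.
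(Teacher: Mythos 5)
Your proposal is correct and follows essentially the same route as the paper: the corollary is stated with an immediate \textsc{qed} precisely because it is meant to follow from the preceding discussion of Section~\ref{sec:minimal} --- the reduction of minimal position to minimising the piecewise-linear function $f$ over the lattice points of the region $x_i, y_i \geq 0$, $x_i + y_i \leq \mathbf{e_i}$, the bound of $5^{2\zeta}$ on the number of linear pieces, and an application of Theorem~\ref{thrm:integer_LP} with a fixed number of variables to each piece. Your additional remarks (the closed formula in the non-isolating case, and the observation that each piece is cut out by the linear case-inequalities so that the ILP formulation is legitimate) are exactly the bookkeeping the paper elides.
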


\begin{remark}
We may also view $\beta$ as a measured lamination.
Then minimal position occurs when there are no bigons between $\alpha$ and the underlying lamination of $\beta$.
Thus we can find the minimal position of $\alpha$ with respect to $\beta$ by solving a collection of linear programming problems instead of integer linear programming problems.
\end{remark}

\section{Flips and twists}
\label{sec:flips_twists}

We now consider the more general case, in which $\alpha$ and $\beta$ are given on a triangulation $\calT$ which is not $\alpha$--minimal.
To deal with this case, we introduce two basic moves for modifying triangulations; the \emph{flip} and the \emph{twist}.
We use these moves to give a polynomial time reduction back to the case in Section~\ref{sec:minimal}.

Firstly, we say that an edge of $\calT$ is \emph{flippable} if it is contained in two distinct triangles. If $e$ is such an edge then we may flip it to obtain a new triangulation $\calT'$ as shown in Figure~\ref{fig:flip}.

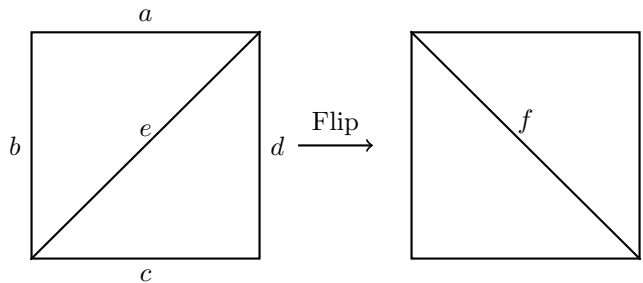
\begin{figure}[ht]
\centering
\begin{tikzpicture}[scale=2,thick]

\node (rect) at (-1.5, 0) [draw,minimum width=3cm,minimum height=3cm] {};
\node (rect2) at (1, 0) [draw,minimum width=3cm,minimum height=3cm] {};

\draw (rect.south west) -- node [above] {$e$} (rect.north east);
\draw (rect2.north west) -- node [above, yshift=1] {$f$} (rect2.south east);

\node (a) at (rect.north) [anchor=south] {$a$};
\node (b) at (rect.west) [anchor=east] {$b$};
\node (c) at (rect.south) [anchor=north] {$c$};
\node (d) at (rect.east) [anchor=west] {$d$};

\draw [thick,->] ($(rect.east)!0.25!(rect2.west)$) -- node[above] {Flip} ($(rect.east)!0.75!(rect2.west)$);
\end{tikzpicture}
\caption{Flipping an edge of a triangulation.}
\label{fig:flip}
\end{figure}

Secondly, if $\delta$ is a curve on $S$ then we may modify $\calT$ by performing the \emph{Dehn twist} $T_\delta^k$ \cite[Chapter~3]{FM}.
This move cuts the surface open along the curve $\delta$ and rotates one of the boundary components $k$ times to the right (or $|k|$ times to the left if $k$ is negative) before regluing the boundary components together.

In both cases it is straightforward to compute the edge vectors of $\alpha$ and $\beta$ on the new triangulation after performing such a move:

\begin{proposition}[{\cite[Page~30]{MosherFoliations}}]
\label{prop:flip_intersection}
Suppose that $\gamma$ is a curve and $e$ is a flippable edge of a triangulation $\calT$ as shown in Figure~\ref{fig:flip} then
\[ \intersection(\gamma, f) = \max(\intersection(\gamma, a) + \intersection(\gamma, c), \intersection(\gamma, b) + \intersection(\gamma, d)) - \intersection(\gamma, e). \]
Hence we can compute $\calT'(\gamma)$ in at most $O(\log(\intersection(\gamma, \calT)))$ operations. \qed
\end{proposition}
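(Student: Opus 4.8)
The plan is to compute $\intersection(\gamma, f)$ by realising $\gamma$ in minimal position with respect to $\calT$ and counting its crossings with the new diagonal directly. First I would take a taut representative of $\gamma$ having no bigons with the $1$--skeleton of $\calT$, so that it simultaneously realises $\intersection(\gamma, x)$ for every edge $x$. Restricting to the quadrilateral $Q = a \cup b \cup c \cup d$ formed by the two triangles meeting along $e$ (as in Figure~\ref{fig:flip}), the intersection $\gamma \cap Q$ is then a disjoint union of embedded arcs, each joining two \emph{distinct} boundary edges: an arc returning to the same edge would cut off a bigon and could be isotoped away, contradicting tautness. I would classify these arcs by which pair of edges they join, giving four \emph{corner} types with multiplicities $p, q, r, s$ (at the corners shared by $a,b$; $b,c$; $c,d$; $d,a$ respectively) and two \emph{transversal} types with multiplicities $u$ (joining $a$ to $c$) and $v$ (joining $b$ to $d$). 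The key geometric point is that an $a$--$c$ arc and a $b$--$d$ arc would have to cross one another, so since $\gamma$ is embedded these types cannot coexist; hence $uv = 0$, equivalently $\max(u,v) = u + v$.

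Next I would read off each intersection number as a linear form in these multiplicities by recording which arcs meet each edge and which corner and transversal arcs straddle each diagonal. Writing $A = \intersection(\gamma, a)$ and $B, C, D, E, F$ for the analogous quantities for $b, c, d, e, f$, a direct inspection yields
\[ A = p + s + u, \quad B = p + q + v, \quad C = q + r + u, \quad D = r + s + v, \quad E = q + s + u + v, \quad F = p + r + u + v. \]
From here the formula is immediate: $A + C = (p+q+r+s) + 2u$ and $B + D = (p+q+r+s) + 2v$, so using $\max(u,v) = u + v$ we obtain $\max(A + C, B + D) = (p+q+r+s) + 2(u+v)$, and subtracting $E$ gives $\max(A+C, B+D) - E = p + r + u + v = F$, as claimed. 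The complexity statement then follows at once, since $\intersection(\gamma, f)$ is computed from the entries of $\calT(\gamma)$ by a bounded number of additions and a single comparison on integers of bit-size $O(\log \intersection(\gamma, \calT))$.

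The linear bookkeeping above is routine; I expect the genuine obstacle to lie in the two facts that underpin it. The first is that the taut representative for $\calT$ may also be used to compute $\intersection(\gamma, f)$, i.e. that passing to the new diagonal creates no bigons. The second is the mutual exclusivity $uv = 0$ of the transversal types. Both reduce to the observation that an embedded normal arc meets any chord of the convex quadrilateral $Q$ at most once, together with the fact that two arcs joining complementary pairs of opposite sides are forced to intersect. Once these are established the argument is elementary, which is consistent with the result being quoted from \cite[Page~30]{MosherFoliations}.
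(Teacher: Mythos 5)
Your proof is correct. The paper gives no argument of its own here---the formula is simply quoted from Mosher with a \verb|\qed|---and your normal-arc bookkeeping (classifying $\gamma\cap Q$ into the four corner types $p,q,r,s$ and the two mutually exclusive transversal types $u,v$ with $uv=0$, then reading off $A,\dots,F$ as linear forms) is exactly the standard argument underlying the cited result, with the linear algebra and the tautness/exclusivity points all checking out.
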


\begin{proposition}[{\cite{Schaefer07computingdehn}}]
\label{prop:twist_intersection}
Suppose that $\delta$ and $\gamma$ are curves. Given $\calT(\delta)$, $\calT(\gamma)$ and $k \in \ZZ$ we can compute $\calT'(\gamma)$ where $\calT' \defeq T_\delta^k(\calT)$ in at most
\[ O(\poly(\log(\intersection(\gamma, \calT)) + \log(\intersection(\delta, \calT)) + \log(k))) \]
operations. \qed
\end{proposition}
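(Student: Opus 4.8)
The plan is to reduce the computation to understanding how the intersection numbers $\intersection(T_\delta^k(\gamma), e_i)$ depend on the single integer parameter $k$, and then to show that this dependence is piecewise affine with a tightly controlled number of pieces. Since $T_\delta^k$ is a homeomorphism fixing $\delta$, we have the identity $\intersection(\gamma, T_\delta^k(e_i)) = \intersection(T_\delta^{-k}(\gamma), e_i)$, so it is equivalent, and notationally cleaner, to twist the \emph{curve} rather than the triangulation. First I would therefore work entirely with the edge vector of $T_\delta^{-k}(\gamma)$ on the fixed triangulation $\calT$, and only transcribe the result into $\calT'(\gamma)$ at the very end.

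Next I would localise the problem to an annular neighbourhood $A$ of $\delta$. The Dehn twist is supported in $A$, so outside $A$ the curve $\gamma$ is untouched; every new intersection arises from the $p \defeq \intersection(\delta, \gamma)$ strands of $\gamma$ that cross the core of $A$. Cutting $S$ along $\delta$ and passing to the annulus, the effect of $T_\delta^{-k}$ is simply to wrap each of these strands a further $|k|$ times around the core before reglueing. The contribution of this wrapping to $\intersection(T_\delta^{-k}(\gamma), e_i)$ can then be written down explicitly in terms of $p$, the intersection number $\intersection(\delta, e_i)$ (which is just the $i$\nth{} entry of the already-given edge vector $\calT(\delta)$), and the combinatorial data recording how the strand endpoints are distributed along $\delta$ relative to $e_i$.

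The key structural claim I would establish is that, for each edge $e_i$, the map $k \mapsto \intersection(T_\delta^{-k}(\gamma), e_i)$ is a convex piecewise-linear function of $k$ with integer slopes and only a \emph{bounded} number of breakpoints, all of which lie in a range of $k$ of bit-size $O(\log(\intersection(\gamma,\calT)) + \log(\intersection(\delta,\calT)))$; beyond these breakpoints the function is affine, of the form $|k|\, p\, \intersection(\delta, e_i) + c_i$ for an explicitly computable constant $c_i$. This is the standard \emph{linear growth of intersection number under twisting}. Granting it, I would compute the finitely many breakpoints together with the affine coefficients, each from intersection data that is available or computable within the stated budget, locate which piece the given $k$ falls into, and evaluate the corresponding affine expression. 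Each entry of $\calT'(\gamma)$ is thereby obtained by evaluating a fixed affine function of $k$, at a cost of $O(\poly(\log|k|))$ arithmetic operations on integers of the asserted bit-size.

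The main obstacle is twofold: proving the piecewise-affine structure with a bounded number of breakpoints, and controlling the bit-sizes of all intermediate quantities so that the total cost stays polynomial in $\log(\intersection(\gamma,\calT)) + \log(\intersection(\delta,\calT)) + \log k$. The naive route of applying the single twist $T_\delta$ a total of $k$ times is exponential in $\log k$ and must be avoided entirely; this is precisely where the explicit affine-in-$k$ formula is indispensable, since it replaces the $k$-fold iteration by one evaluation and so turns the dependence on $k$ from linear into logarithmic. Care is also needed because $\delta$ itself may meet $\calT$ with large intersection number, so the annular reduction must be carried out purely at the level of normal coordinates rather than by drawing a combinatorially bounded picture.
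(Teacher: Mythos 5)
The paper does not actually prove this proposition: it is imported wholesale from the cited reference (Schaefer--Sedgwick--\v{S}tefankovi\v{c}) and closed with a \qed, so the only thing to compare your proposal against is that reference's argument --- and your outline is essentially that argument. Twisting the curve rather than the triangulation, localising to an annular neighbourhood of $\delta$, and exploiting the fact that the edge vector of $T_\delta^{-k}(\gamma)$ eventually depends affinely on $k$ (so that one evaluates a formula rather than iterating the twist $|k|$ times) is exactly how the cited work achieves the $\log(k)$ dependence. You also correctly identify the two genuine difficulties: that the naive $k$-fold iteration is exponential in the bit-size of $k$, and that even a single application of $T_\delta$ must be carried out in normal coordinates because $\intersection(\delta, \calT)$ may itself be exponentially large in the input size.

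The one substantive gap is that your ``key structural claim'' --- that $k \mapsto \intersection(T_\delta^{-k}(\gamma), e_i)$ is convex piecewise linear with a bounded number of breakpoints, becoming exactly $|k|\,\intersection(\delta,\gamma)\,\intersection(\delta,e_i) + c_i$ outside a controlled range --- is the entire technical content of the proposition, and you explicitly grant it rather than prove it. The standard inequality of Farb--Margalit (Proposition 3.4) only pins $\intersection(T_\delta^k(\gamma), e_i)$ to within an additive error of $\intersection(\gamma, e_i)$ of the linear function, which does not by itself give eventual exact affineness or a bound on where the breakpoints lie; establishing that the increments $\calT(T_\delta^{k+1}(\gamma)) - \calT(T_\delta^{k}(\gamma))$ stabilise after a bounded number of twists, and that each of those initial twists is itself computable in polynomial time from normal coordinates, is where all the work in the cited paper lives. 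So your proposal is a faithful roadmap to the right proof rather than the proof itself; to complete it you would need to supply the stabilisation lemma and the polynomial-time computation of a single twist.
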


The usefulness of these moves comes from the fact that if a curve appears very complicated on $\calT$ then there is always such a move which reduces the number of intersections by a definite fraction:

\begin{theorem}[{\cite[Theorem~3.7]{BellSimplifying}}]
\label{thrm:fraction_intersection}
Let $D \defeq 80 \zeta B (10B + 1)^C$ where $B \defeq 5^{2\zeta}$ and $C \defeq 2^{2\zeta}$. If $\intersection(\gamma, \calT) > D$ then there is a triangulation $\calT'$ such that either:
\begin{itemize}
\item $\calT$ and $\calT'$ differ by a flip, or
\item $\calT' = T_\delta^k(\calT)$ where $|k| \leq \intersection(\gamma, \calT)$ and $\intersection(\delta, \calT) \leq 2 \zeta$
\end{itemize}
and $\intersection(\gamma, \calT') \leq (1 - 1/D) \intersection(\gamma, \calT)$. \qed
\end{theorem}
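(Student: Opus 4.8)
The plan is to encode $\gamma$ as a weighted normal curve on $\calT$ and to establish a quantitative dichotomy: if $\intersection(\gamma, \calT)$ is large then either some edge is crossed so inefficiently that a single flip decreases $\intersection(\gamma, \calT)$, or a strand of $\gamma$ must spiral many times around a short curve $\delta$, which we then unwind by a bounded power of the twist $T_\delta$. These are exactly the two bullet points of the statement, so the whole argument amounts to a counting bound that forces one of the two structures to appear once $\intersection(\gamma, \calT) > D$. To set this up I would record the local combinatorics of $\gamma$ as in Proposition~\ref{prop:intersection_PL}: inside each triangle $\gamma$ is a union of parallel families of normal arcs, and the combinatorial type of the configuration on a triangle is one of at most $B = 5^{2\zeta}$ possibilities, the same piece count that governs the linear programming analysis. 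Following a single strand from triangle to triangle records a walk whose \emph{state} is the configuration across the current cross-section, namely the subset of the $2\zeta$ edge-sides that the strand currently meets; there are at most $2^{2\zeta} = C$ such states.

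The key structural step is the head--middle--tail decomposition $\gamma = \headpiece \cdot \middlepiece \cdot \tailpiece$ of a spiralling strand. If a strand crosses $\calT$ more than $C$ times then, by the pigeonhole principle, it must revisit a state; the portion $\middlepiece$ between two coincident states is periodic, and its period closes up to a simple closed curve $\delta$. Since one period crosses each of the $\zeta$ edges at most twice we obtain $\intersection(\delta, \calT) \le 2\zeta$, and the number of full periods contained in $\middlepiece$ is the twisting power $k$, which cannot exceed the weight of the strand and hence satisfies $|k| \le \intersection(\gamma, \calT)$. Applying $T_\delta^{\mp k}$ then deletes these periods, cancels $\headpiece$ against $\tailpiece$, and removes the spiral.

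The threshold $D$ itself I would obtain by contraposition. Suppose neither outcome is available. Because no flip decreases $\intersection(\gamma, \calT)$, Proposition~\ref{prop:flip_intersection} forces every edge to be crossed efficiently, which in turn bounds the size of each parallel family by $80\zeta B$. Because no strand spirals, no state repeats, so the number of distinct strand types is at most the number of non-repeating state walks, which is at most $(10B + 1)^C$. Multiplying the per-family weight bound by the number of types gives $\intersection(\gamma, \calT) \le 80\zeta B (10B+1)^C = D$, so $\intersection(\gamma, \calT) > D$ does force either a large parallel family (hence a reducing flip) or a spiral (hence a reducing twist). In either case a pigeonhole estimate shows that the offending family, respectively spiral, carries at least a $1/D$ proportion of the total weight, so removing it yields $\intersection(\gamma, \calT') \le (1 - 1/D)\,\intersection(\gamma, \calT)$.

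I expect the main obstacle to be precisely this last quantitative step: checking that the \emph{same} constant $D$ which certifies ``complicated enough'' also certifies the reduction factor. This requires carefully matching the family-size bound $80\zeta B$ and the walk count $(10B+1)^C$ against the weight actually removed by the flip or by $T_\delta^{\mp k}$, and verifying that unwinding $\middlepiece$ does not secretly reintroduce intersections where $\headpiece$ and $\tailpiece$ rejoin the remainder of $\gamma$. Keeping these estimates tight enough that the threshold and the reduction factor are governed by one and the same $D$ is the delicate heart of the argument.
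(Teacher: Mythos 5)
The first thing to say is that this paper does not prove Theorem~\ref{thrm:fraction_intersection} at all: it is quoted verbatim from \cite[Theorem~3.7]{BellSimplifying} and stamped with a \emph{qed} symbol, so there is no in-paper proof to compare your attempt against. Your outline --- a dichotomy between an inefficiently crossed edge (repaired by a flip) and a long spiral around a short curve $\delta$ (repaired by a power of $T_\delta$), with the threshold $D$ obtained by a counting/pigeonhole argument over combinatorial types --- is consistent in spirit with the strategy of the cited source, and your guess that $B = 5^{2\zeta}$ and $C = 2^{2\zeta}$ arise from counting local configurations is a reasonable reverse-engineering of the constants. But as a proof it is not complete, and the missing pieces are exactly the ones that carry the content of the theorem.

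Concretely: (1) The claim that the portion of a strand between two coincident ``states'' closes up to an \emph{essential simple closed} curve $\delta$ with $\intersection(\delta,\calT) \le 2\zeta$ is asserted, not argued; a periodic piece of a normal curve can a priori close up to something non-simple or inessential, and ruling this out is a genuine step. (2) The final quantitative claim --- that the offending parallel family or spiral carries at least a $1/D$ fraction of $\intersection(\gamma,\calT)$, and that the flip or twist removes essentially all of it without creating new intersections elsewhere --- is precisely what you flag as ``the delicate heart'' and then leave unproved. For the twist case in particular, $\calT' = T_\delta^k(\calT)$ changes the triangulation globally, so one must control $\intersection(\gamma, T_\delta^k(e))$ for \emph{every} edge $e$, not just verify that one spiralling strand unwinds; the bound $|k| \le \intersection(\gamma,\calT)$ and the factor $(1-1/D)$ both depend on this global bookkeeping. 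Since you explicitly defer these estimates, the proposal is a plausible roadmap rather than a proof, and the correct course in the context of this paper is simply to cite \cite{BellSimplifying} as the authors do.
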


Thus, by using Theorem~\ref{thrm:fraction_intersection} at most $O(\log(\intersection(\alpha, \calT)))$ times we can obtain a triangulation $\calT'$ where $\intersection(\alpha, \calT') \leq D$.
To continue simplifying further we use the following lemma:

\begin{lemma}
\label{lem:drop_intersection}
If $\calT$ is not $\alpha$--minimal then by performing at most $2 \zeta$ flips we can reach a triangulation $\calT'$ such that $\intersection(\alpha, \calT') < \intersection(\alpha, \calT)$.
\end{lemma}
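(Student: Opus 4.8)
The plan is to reuse, essentially verbatim, the flipping engine developed in the proof of Proposition~\ref{prop:tripod_corridor}. That argument establishes the following general principle: if $e$ is an arc which is disjoint from $\alpha$, is not isotopic to an edge of $\calT$, and meets $\calT$ in at most $2\zeta$ points, then by walking along $e$ and flipping each edge of $\calT$ that it crosses we obtain, after at most $2\zeta$ flips, a triangulation $\calT'$ containing $e$ as an edge. By \cite[Page~38]{MosherFoliations} each such flip reduces $\intersection(e, \calT)$ without increasing $\intersection(\alpha, \calT)$, and since $\calT'$ now has the edge $e$ disjoint from $\alpha$ we obtain the strict inequality $\intersection(\alpha, \calT') < \intersection(\alpha, \calT)$. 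Granting this engine, the whole lemma reduces to a single existence statement: when $\calT$ is not $\alpha$--minimal, there is an arc $e$ disjoint from $\alpha$ which is not isotopic to an edge of $\calT$ and which meets $\calT$ at most $2\zeta$ times.

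First I would establish this existence statement in its contrapositive form. Suppose that every arc disjoint from $\alpha$ is isotopic to an edge of $\calT$. Then the edges of $\calT$ that miss $\alpha$ already cut the complement $S - \alpha$ into ideal triangles, so $\alpha$ can only run through the remaining triangles as a tripod or a corridor; equivalently $\intersection(\alpha, e) \in \{0, 2\}$ for every edge $e$. By the combinatorial analysis of \cite[Section~2.4.2]{BellThesis} recalled at the start of this section, such a triangulation is $\alpha$--minimal. Taking the contrapositive, a triangulation that is not $\alpha$--minimal must admit a disjoint arc that is not isotopic to any edge.

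Second I would arrange the crossing bound $\intersection(e, \calT) \leq 2\zeta$. Among all arcs disjoint from $\alpha$ and not isotopic to an edge of $\calT$, I would choose one, $e$, in minimal position with respect to $\calT$ and minimising $\intersection(e, \calT)$. Since $e$ can be pushed to run alongside $\alpha$ inside the complementary regions of $\alpha$ and $\calT$, it may be taken to cross each edge of $\calT$ at most twice, giving $\intersection(e, \calT) \leq 2\zeta$. Feeding this $e$ into the engine of the first paragraph then produces $\calT'$ with $\intersection(\alpha, \calT') < \intersection(\alpha, \calT)$ using at most $2\zeta$ flips, as required.

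The step I expect to be the main obstacle is the existence statement of the first paragraph, and in particular making rigorous the claim that if the disjoint edges of $\calT$ already triangulate $S - \alpha$ then $\alpha$ is forced into tripods and corridors, together with the accompanying bound that the witnessing arc $e$ may be chosen to meet $\calT$ at most $2\zeta$ times. Once the existence and the crossing bound are in hand, the strict decrease and the flip count follow immediately from the Proposition~\ref{prop:tripod_corridor} machinery and \cite[Page~38]{MosherFoliations}, so no further work is needed.
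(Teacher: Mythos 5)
Your proposal takes a genuinely different route from the paper, and it does not go through. The paper's proof is a three-way case split on the values of $\intersection(\alpha, e)$: if some edge has $\intersection(\alpha, e) > 2$ a single flip suffices by \cite[Lemma~2.4.3]{BellThesis}; if some edge has $\intersection(\alpha, e) = 1$ at most two flips suffice by \cite[Lemma~2.4.4]{BellThesis}; and only in the remaining case, where $\intersection(\alpha, e) \in \{0, 2\}$ for every edge, is the disjoint-arc engine of Proposition~\ref{prop:tripod_corridor} invoked. Your attempt to make that engine carry the whole lemma has two genuine gaps. First, the engine does not do what you claim: an arc $e$ disjoint from $\alpha$ and not isotopic to an edge of $\calT$ does not, after being flipped into the triangulation, force a strict decrease in $\intersection(\alpha, \calT)$. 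The flips along $e$ merely do not increase $\intersection(\alpha, \calT)$; if, for instance, $e$ lies entirely in a region cut out by edges already disjoint from $\alpha$, every one of those flips leaves $\intersection(\alpha, \calT)$ unchanged. In Proposition~\ref{prop:tripod_corridor} the strict decrease comes from the specific choice of $e$ running around $\alpha$ between two corridors, not from disjointness alone.

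Second, the contrapositive argument is unsound at both ends. The hypothesis ``every arc disjoint from $\alpha$ is isotopic to an edge of $\calT$'' is essentially never satisfiable --- a punctured component of $S - \alpha$ of positive complexity carries infinitely many isotopy classes of essential arcs while $\calT$ has only $\zeta$ edges --- so the contrapositive tells you nothing about which triangulations fail to be $\alpha$--minimal. And the concluding inference, that $\intersection(\alpha, e) \in \{0, 2\}$ for all $e$ implies $\calT$ is $\alpha$--minimal, reverses the implication actually established in \cite[Section~2.4.2]{BellThesis} and is false: the two-corridor configuration in the proof of Proposition~\ref{prop:tripod_corridor} has all intersection numbers in $\{0, 2\}$ yet is shown there not to be minimal. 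To close these gaps you need precisely the two cited lemmas from \cite{BellThesis} handling the cases $\intersection(\alpha, e) > 2$ and $\intersection(\alpha, e) = 1$, after which the tripod/corridor analysis applies only to the residual $\{0, 2\}$ case --- which is the paper's proof.
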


\begin{proof}
If $\intersection(\gamma, e) > 2$ for some edge $e$ of $\calT$ then there is an edge of $\calT$ which can be flipped in order to reduce the intersection number \cite[Lemma~2.4.3]{BellThesis}.
Hence we may assume that $\intersection(\alpha, e) \leq 2$ for each edge $e$ of $\calT$.

Now if $\intersection(\alpha, e) = 1$ for some edge $e$ then by performing at most two flips we can reduce the intersection number \cite[Lemma~2.4.4]{BellThesis}.

On the other hand, if $\intersection(\alpha, e) \in \{0, 2\}$ for every edge $e$ then $\alpha$ looks like a tripod or corridor in each triangle of $\calT$.
Hence, by the same argument as in the proof of Proposition~\ref{prop:tripod_corridor}, by performing at most $2 \zeta$ flips we can reach a triangulation with fewer intersections with $\alpha$.
\end{proof}

\begin{corollary}
\label{cor:general_minimal}
Given $\calT(\alpha)$ and $\calT(\beta)$ we can compute an $\alpha$--minimal triangulation $\calT'$ together with $\calT'(\alpha)$ and $\calT'(\beta)$ in at most
\[ O(\poly(\log(\intersection(\alpha, \calT)) + \log(\intersection(\beta, \calT)))) \]
operations. Furthermore, the bit-size of $\calT'(\beta)$ is at most
\[ O(\log(\intersection(\beta, \calT)) + \log^2(\intersection(\alpha, \calT))). \]
Thus we can compute minimal position representatives on $\calT'$, and so $\intersection(\alpha, \beta)$ in polynomial time in the bit-sizes of $\calT(\alpha)$ and $\calT(\beta)$ too.
\end{corollary}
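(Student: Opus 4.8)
The plan is to reduce the general problem to the $\alpha$-minimal case treated in Section~\ref{sec:minimal} and then invoke the corollary at the end of that section, all the while carrying $\beta$ along. The reduction runs in two phases. \emph{Phase one}: while $\intersection(\alpha, \calT) > D$, use Theorem~\ref{thrm:fraction_intersection} to find a flip or twist that multiplies $\intersection(\alpha, \calT)$ by at most $(1 - 1/D)$, and update the edge vectors of both $\alpha$ and $\beta$ using Proposition~\ref{prop:flip_intersection} and Proposition~\ref{prop:twist_intersection}. Since $D \in O(1)$, after $O(\log(\intersection(\alpha, \calT)))$ such moves we reach a triangulation with $\intersection(\alpha, \calT) \leq D$. \emph{Phase two}: while the current triangulation is not $\alpha$-minimal, apply Lemma~\ref{lem:drop_intersection} to perform at most $2\zeta$ flips that strictly decrease $\intersection(\alpha, \calT)$. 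As $\intersection(\alpha, \calT) \leq D \in O(1)$ at the start of this phase and drops by at least one each round, this phase terminates after $O(1)$ rounds, leaving an $\alpha$-minimal triangulation $\calT'$ together with $\calT'(\alpha)$ and $\calT'(\beta)$.

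To bound the running time, first I would bound the cost of a single move. Updating $\calT(\alpha)$ costs $O(\log(\intersection(\alpha,\calT)))$ operations for a flip and, for a twist $T_\delta^k$, costs $O(\poly(\log(\intersection(\alpha,\calT)) + \log(\intersection(\delta,\calT)) + \log|k|))$ operations. Theorem~\ref{thrm:fraction_intersection} guarantees $\intersection(\delta,\calT) \leq 2\zeta \in O(1)$ and $|k| \leq \intersection(\alpha,\calT)$, so this is $O(\poly(\log(\intersection(\alpha,\calT))))$; the same two propositions bound the cost of updating $\calT(\beta)$ by $O(\poly(\log(\intersection(\beta,\calT)) + \log(\intersection(\alpha,\calT))))$. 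Taking the proof of Theorem~\ref{thrm:fraction_intersection} to be constructive, so that the move itself can be located within this budget, each of the $O(\log(\intersection(\alpha,\calT)))$ moves costs $O(\poly(\log(\intersection(\alpha,\calT)) + \log(\intersection(\beta,\calT))))$ operations, and hence so does the entire reduction.

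The delicate step, which I expect to be the main obstacle, is controlling the bit-size of the edge vector of $\beta$ as the triangulation changes, since this is what governs the final cost. A flip changes only one coordinate, replacing $\intersection(\beta,e)$ by $\intersection(\beta,f) \leq \intersection(\beta,a) + \intersection(\beta,c)$, so it at most doubles $\intersection(\beta,\calT)$ and thus increases $\log(\intersection(\beta,\calT))$ by an additive constant. A twist is more dangerous: the formula underlying Proposition~\ref{prop:twist_intersection} gives $\intersection(\beta,\calT') \leq \intersection(\beta,\calT) + |k|\,\intersection(\delta,\calT)\,\intersection(\delta,\beta)$, and combining $\intersection(\delta,\calT) \leq 2\zeta$ with the standard bound $\intersection(\delta,\beta) = O(\intersection(\delta,\calT)\,\intersection(\beta,\calT))$ yields $\intersection(\beta,\calT') = O(|k|\,\intersection(\beta,\calT))$. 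Since $|k| \leq \intersection(\alpha,\calT)$, each twist increases $\log(\intersection(\beta,\calT))$ by only $O(\log(\intersection(\alpha,\calT)))$. As there are $O(\log(\intersection(\alpha,\calT)))$ moves overall, these increases sum to $O(\log^2(\intersection(\alpha,\calT)))$, so the bit-size of $\calT'(\beta)$ is $O(\log(\intersection(\beta,\calT)) + \log^2(\intersection(\alpha,\calT)))$; here I use that $\intersection(\alpha,\calT)$ only decreases, so every intermediate value is bounded by the original one. The point to guard against is precisely that a run of twists could in principle inflate $\calT(\beta)$ far more; what rescues us is that the twisting powers are bounded by $\intersection(\alpha,\calT)$, the twisting curves are uniformly short, and only logarithmically many moves occur.

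Finally, I would apply the corollary at the end of Section~\ref{sec:minimal} to $\calT'$, computing minimal position representatives and hence $\intersection(\alpha,\beta)$ in time polynomial in the bit-size of $\calT'(\beta)$. As $\poly(\log^2 x) = \poly(\log x)$, this is again $O(\poly(\log(\intersection(\alpha,\calT)) + \log(\intersection(\beta,\calT))))$ and hence polynomial in the bit-sizes of $\calT(\alpha)$ and $\calT(\beta)$, completing the argument.
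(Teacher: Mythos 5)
Your proposal is correct and follows essentially the same route as the paper: iterate Theorem~\ref{thrm:fraction_intersection} and Lemma~\ref{lem:drop_intersection} to reach an $\alpha$--minimal triangulation in $O(\log(\intersection(\alpha,\calT)))$ moves, push both edge vectors through via Propositions~\ref{prop:flip_intersection} and~\ref{prop:twist_intersection}, bound the growth of $\intersection(\beta,\calT)$ per move (doubling for flips, a factor of $O(\intersection(\alpha,\calT))$ for twists) to get the $O(\log(\intersection(\beta,\calT)) + \log^2(\intersection(\alpha,\calT)))$ bit-size bound, and finish with the corollary of Section~\ref{sec:minimal}. Your explicit two-phase organisation and the remark about constructivity of Theorem~\ref{thrm:fraction_intersection} are minor presentational refinements of the same argument.
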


\begin{proof}
By combining Theorem~\ref{thrm:fraction_intersection} and Lemma~\ref{lem:drop_intersection} we obtain a sequence of moves from $\calT$ to an $\alpha$--minimal triangulation $\calT'$.
By Proposition~\ref{prop:flip_intersection} and Proposition~\ref{prop:twist_intersection}, we can push $\calT(\alpha)$ and $\calT(\beta)$ through these moves and so obtain the edge vectors for these curves on $\calT'$ too.
As there are only $O(\log(\intersection(\alpha, \calT)))$ such moves, we can perform this computation in at most
\[ O(\poly(\log(\intersection(\alpha, \calT)) + \log(\intersection(\beta, \calT)))) \]
operations.

We now consider the effect of an individual move on $\intersection(\beta, \calT)$:
\begin{itemize}
\item Performing a flip increases $\intersection(\beta, \calT)$ by a factor of at most two by Proposition~\ref{prop:flip_intersection}.
\item Performing $T_\delta^k$ increases $\intersection(\beta, \calT)$ by at most $|k| \intersection(\delta, \calT) \intersection(\delta, \beta)$ \cite[Proposition~3.4]{FM}.
However for the twists that we will perform:
\begin{itemize}
\item $|k| \leq \intersection(\alpha, \calT)$,
\item $\intersection(\delta, \calT) \leq 2 \zeta$, and
\item $\intersection(\delta, \beta) \leq 2 \zeta \intersection(\beta, \calT)$.
\end{itemize}
Hence, performing this move increases $\intersection(\beta, \calT)$ by at most a factor of $4 \zeta^2 \intersection(\alpha, \calT)$.
\end{itemize}
Again, as only $O(\log(\intersection(\alpha, \calT)))$ such moves are performed, we have that
\[ \intersection(\beta, \calT') \in O\left(\intersection(\beta, \calT) \intersection(\alpha, \calT)^{\log(\intersection(\alpha, \calT))} \right) \]
and so the bound holds by taking logs.

We can now reapply the procedure of Section~\ref{sec:minimal} to compute minimal position for $\alpha$ and $\beta$ on $\calT'$ and so deduce $\intersection(\alpha, \beta)$.
As $\intersection(\alpha, \calT')$ and $\intersection(\beta, \calT')$ are sufficiently small, this can also be done in polynomial time.
\end{proof}

\section{Further extensions and applications}
\label{sec:extensions}

We finish with some further generalisations of the procedure of Corollary~\ref{cor:general_minimal}.

\subsection{Multicurves}
\label{sub:multicurves}

A slight variant of this procedure works even when $\alpha$ and $\beta$ have multiple components.

To handle this case we first use a polynomial time algorithm to extract the individual components and their multiplicities \cite[Section~4]{AHT} \cite[Section~4]{BellSimplifying} \cite[Section~6.4]{EricksonNayyeri}.
That is, we find $a_i, b_j \in \NN$ and curves $\alpha_i$, $\beta_j$ such that
\[ \alpha = \bigcup_i a_i \cdot \alpha_i \inlineand \beta = \bigcup_j b_j \cdot \beta_j. \]
We proceed by computing $\intersection(\alpha_i, \beta_j)$ for each $i$ and $j$ by the above procedure and then
\[ \intersection(\alpha, \beta) = \sum_{i,j} a_i b_j \intersection(\alpha_i, \beta_j). \]

\subsection{Multiarcs}
\label{sub:multiarcs}

The above procedure also works when $\alpha$ or $\beta$ is a \emph{multiarc}, that is, the isotopy class of the image of a smooth proper embedding of a finite number of copies of $[0, 1]$ (whose endpoints connect into punctures) into $S$.

If $\alpha$ is an arc then an $\alpha$--minimal triangulation is one which contains $\alpha$ as an edge.
Thus we perform the simplification routine to obtain $\alpha$ and $\beta$ on an $\alpha$--minimal triangulation $\calT'$. 
After this $\intersection(\alpha, \beta)$ is just the entry of $\calT'(\beta)$ associated to the edge $\alpha$.

If $\alpha$ is a multiarc then we extract its individual components and their multiplicities and proceed as in Section~\ref{sub:multicurves}.

The only modification needed to enable this is a slight change to how these multiarcs are represented combinatorially.
Since a non-trivial multiarc can have zero intersection with all edges, we make a slight modification to the standard definition of intersection number.

\begin{definition}
If $\alpha$ is a multiarc which contains $k$ copies of the edge $e$ of $\calT$ then their \emph{intersection number} is defined to be $\intersection(\alpha, e) \defeq -k$.
\end{definition}

This allows us to again represent a multiarc via its intersection numbers with the edges of a triangulation and for the procedure to work as before.

\subsection{Boundaries of neighbourhoods}
\label{sub:boundaries}

For curves $\alpha$ and $\beta$ let $\gamma \defeq \partial(N(\alpha \cup \beta))$.
When $\alpha$ and $\beta$ are in minimal position on an $\alpha$--minimal triangulation, it is straightforward to compute $\calT(\gamma)$ from $\calT(\alpha)$ and $\calT(\beta)$.
The same argument as in Section~\ref{sec:flips_twists} allows us to easily reduce this calculation on any triangulation back to one on an $\alpha$--minimal triangulation.
Thus, we can compute $\calT(\gamma)$ in polynomial time in the bit-sizes of $\calT(\alpha)$ and $\calT(\beta)$.

One key application for this is when $h$ is a reducible, aperiodic mapping class.
In this case, suppose that we have found a multicurve $\alpha$ such that $h^n(\alpha) = \alpha$.
We wish to upgrade from this $h^n$--invariant multicurve to an $h$--invariant one.
One way to achieve this is to take the $h$--invariant multicurve $\gamma \defeq \partial N(\alpha \cup h(\alpha) \cup \ldots \cup h^{n-1}(\alpha))$.
This multicurve is essential since $h$ is aperiodic and, by the above argument, we can compute $\calT(\gamma)$ in polynomial time in the size of $h$ and the bit-size of $\calT(\alpha)$.
Here we think of $h$ as being given as a sequence of edge flips starting from $\calT$ and its size is just the number of flips.

\begin{acknowledgements}
The first author acknowledges support from U.S. National Science Foundation grants DMS 1107452, 1107263, 1107367 ``RNMS: GEometric structures And Representation varieties'' (the GEAR Network).

The second author is supported by EPSRC Fellowship Reference EP/N019644/1. 
\end{acknowledgements}

\bibliographystyle{plain}
\bibliography{bibliography}

\end{document}